\numberwithin{equation}{section}
\definecolor{nota}{rgb}{1,0,0.8}
\newtheorem{theo}{\sc Theorem}[section]
\newtheorem{thm}{\sc Theorem}[section]
\newtheorem{lemma}[theo]{\sc Lemma}
\newtheorem{prop}[theo]{\sc Proposition}
\newtheorem{rem}[theo]{\sc Remark}
\def\be{\begin{equation}}
\def\ee{\end{equation}}
\def\disp{\displaystyle}
\def\N{\mathbb{N}}
\def\R{\mathbb{R}}
\def\io{\int_{\Omega}}
\def\norma#1#2{\|#1\|_{\lower 4pt \hbox{$\scriptstyle #2$}}}
\def\div{{\rm div}}
\def\D{\nabla}
\begin{document}

\title{An elliptic problem with two singularities}


\author{Gisella Croce}
\address{Laboratoire de Math\'ematiques Appliqu\'ees du Havre
\\
Universit\'e du Havre
\\
25, rue Philippe Lebon
\\
76063 Le Havre (FRANCE)}
\email{gisella.croce@univ-lehavre.fr}

\begin{abstract}
We study a Dirichlet problem for an elliptic equation defined by a degenerate coercive operator and a singular right-hand side.
We will show that the right-hand side has some regularizing effects on the solutions, even if it is singular.
\end{abstract}

\maketitle

\section{Introduction}

In this note we study existence of solutions to
the following elliptic problem:
\be\label{pb}
\begin{cases}
\disp
-\div\left(\frac{a(x)\D u}{(1+|u|)^p}\right)  = \frac{f}{|u|^\gamma} & \mbox{in $\Omega$,} \\
\hfill u = 0 \hfill & \mbox{on $\partial\Omega$,}
\end{cases}
\ee
where $\Omega$ is an open bounded subset of  $\R^N, N\geq 3$,  $p$ and $\gamma$ are positive reals,
 $f$ is an  $L^m(\Omega)$ non-negative function and $a: \Omega \to \R$ is a measurable function such that
 $0<\alpha\leq a(x)\leq \beta$, for two positive constants $\alpha$ and $\beta$.

The operator $v\to -\div\left(\frac{a(x)\nabla v}{(1+|v|)^p}\right)$ is  not coercive on $H^1_0(\Omega)$, when $v$ is large (see \cite{po}); moreover, the right-hand side is singular in the variable $u$.
These two difficulties has led us to study problem (\ref{pb}) by  approximation. More precisely, we will define a sequence of problems (see problems  (\ref{pbapprossimanti})), depending on the parameter $n \in \N$, in which we "truncate"
the degenerate coercivity of the operator term and the singularity of the right hand side.
We will prove in Section \ref{sectionapproxproblems} that these problems admit a bounded $H^1_0(\Omega)$ solution $u_n$, $n\in \N$, with the property that
for every subset $\omega\subset\subset \Omega$ there exists a positive constant $c_{\omega}>0$ such that
$u_n\geq c_{\omega}$ almost everywhere in $\omega$ for every $n\in \N$.
In Section \ref{section_stime}
we will get some {\it a priori} estimates on $u_n$ and, by compactness, a function $u$ to which $u_n$ converges. By passing to the limit
in the approximating problems (\ref{pbapprossimanti}),
 $u$ will turn out to be a solution
to problem (\ref{pb}) in the following sense.
For every $\omega \subset\subset \Omega$ there exists $c_{\omega}>0$ such that
$u\geq c_{\omega}>0$ in $\omega$ and
\begin{equation}\label{defndistributional}
\io a(x)\frac{\nabla u\cdot \nabla \varphi}{(1+u)^p}=\io \frac{f}{u^{\gamma}}\varphi\,\,\,\,\,\,\,\,\,\,\,\forall\,\varphi \in C^1_0(\Omega)\,.
\end{equation}

We are now going to motivate the study of problem (\ref{pb}).
The lack of  coercivity of the operator term can negatively affect the existence and regularity of solutions to
\be\label{pbwithout}
\begin{cases}
\disp
-\div\left(\frac{a(x)\D u}{(1+|u|)^p}\right)  = {f} & \mbox{in $\Omega$,} \\
\hfill u = 0 \hfill & \mbox{on $\partial\Omega$.}
\end{cases}
\ee
This was first pointed out in \cite{bdo}, in the case $p<1$. In the case where $p>1$,  the authors of \cite{alvino} proved that no solutions exist for some constant sources $f$.

A natural question is then to search for  lower order terms which regularize the solutions to problem (\ref{pbwithout}).  This leads to the study
of existence and regularity of solutions to problems of the form
\be\label{pbwithout_lot}
\begin{cases}
\disp
-\div\left(\frac{a(x)\D u}{(1+|u|)^p}\right) +g(u,\nabla u) = {f} & \mbox{in $\Omega$,} \\
\hfill u = 0 \hfill & \mbox{on $\partial\Omega$,}
\end{cases}
\ee
under various hypotheses on $g: \Omega \times \R^N\to \R$.  In \cite{bb} the case
$g=g(s)=s$ was examined.  This work takes advantage of the fact that, just as for semilinear elliptic coercive problems (cf. \cite{strauss}), the summability of the solutions is at least that of the source $f$.
In \cite{croce_rendiconti} we analysed two different lower order terms $g=g(s)$.
The first, a generalization of the lower order term studied in \cite{bb}, is $g(s)=|s|^{r-1}s$, $r>0$. The second is a continuous positive increasing function such that
$g(s)\to +\infty$, as $s\to s_0^{-}$, for some $s_0>0$ (see \cite{B}).
The lower order term of \cite{B_proceed} is, roughly speaking,  $b(|s|)|\xi|^2$ where $b$ is continuous and increasing with respect to $|s|$.
In \cite{croce_DCDS-S} we showed the regularizing effects of the lower order term $\displaystyle g(s,\xi)=\frac{|\xi|^2}{s^{q}}$, $q>0$, which grows as a negative power with respect to $s$
and has a quadratic dependence on the gradient variable (see \cite{luigietaltri} and \cite{boccardo_per_puel} for elliptic coercive problems with the same lower order term).

In  all of the above papers a regularizing effect on the solutions to (\ref{pbwithout}) was shown under a sign condition on $g$: either $g(s,\xi)\geq 0$ for every $(s,\xi) \in \R\times \R^N$ and the source is assumed to be positive or $g(s,\xi)s\geq 0$ for every $(s,\xi) \in \R\times \R^N$.

In this paper we study the effects on the solutions of a different term,
$\displaystyle \frac{f}{u^{\gamma}}$, on the right hand side.  Our inspiration is taken from  \cite{bo} where the authors considered the same right hand side, for elliptic semilinear problems whose model is
$\displaystyle -\Delta u=\frac{f}{u^\gamma}$, with zero Dirichlet  condition on the boundary.
Our main result shows that, despite the singularity in $u$, this term has some regularizing effects on the solutions to (\ref{pbwithout}).
The regularity depends on the different values of $\gamma-p\,$: we distinguish the cases $p-1\leq \gamma< p+1$, $\gamma=p+1$, and $\gamma>p+1$.
These statements are made more precise in the following
\begin{thm}\label{main_result}
Let $\gamma\geq p-1$.
\begin{enumerate}
\item
Let
$\gamma<p+1$.
\begin{enumerate}
\item
If
$f \in L^m(\Omega)$, with
$\displaystyle m\geq  \frac{2^*}{2^*-p-1+\gamma}$,
 there exists a  solution $u\in H^1_0(\Omega)$ to (\ref{pb}) in the sense of  (\ref{defndistributional}).
If $\displaystyle \frac{2^*}{2^*-p-1+\gamma} \leq m<\frac N2$, then $u$ belongs to $L^{m^{**}(\gamma+1-p)}(\Omega)$.
 \item
If $f \in L^m(\Omega)$, with
$\displaystyle \max\left\{1,\frac{1^*}{2\cdot {1^*}-p-1+\gamma}\right\}< m < \frac{2^*}{2^*-p-1+\gamma}$,
there exists a  solution  $u \in W^{1,\sigma}_0(\Omega)$, $\displaystyle \sigma=\frac{Nm(\gamma+1-p)}{N-m(p+1-\gamma)}$,
to (\ref{pb})  in the sense of  (\ref{defndistributional}).
\end{enumerate}
\item
Let
$\gamma= p+1$
and assume that
$f \in L^1(\Omega)$.
Then there exists a  solution $u\in H^1_0(\Omega)$ to (\ref{pb})  in the sense of  (\ref{defndistributional}).
\item
Let $\gamma> p+1$
and assume that
$f \in L^1(\Omega)$.
Then there exists a  solution $u \in H^1_{loc}(\Omega)$ to (\ref{pb})  in the sense of  (\ref{defndistributional}), such that $u^{\frac{\gamma+1-p}2} \in H^1_0(\Omega)$.
\item
Let
$f \in L^m(\Omega)$, with $\displaystyle m>\frac N2$.
Then the solution found above is bounded.
\end{enumerate}
\end{thm}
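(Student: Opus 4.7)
The plan is to upgrade the estimates of Section~\ref{section_stime} to a uniform $L^\infty$ bound on the approximating sequence $(u_n)$ from Section~\ref{sectionapproxproblems}, and then to transfer the bound to the limit $u$ by the a.e.\ convergence already established. So the task reduces to showing $\|u_n\|_{L^\infty(\Omega)} \le K$ with $K$ independent of $n$, under the sole hypothesis $m > N/2$.

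I would use Stampacchia's truncation method, adapted so that a single test function neutralizes both the degeneracy of the operator and the singularity of the right-hand side. For $k \ge 1$, taking as test function the primitive
\begin{equation*}
\phi_k(s) = \int_0^s (1+t)^p\, \chi_{\{t>k\}}\, dt
\end{equation*}
(so that $\nabla \phi_k(u_n) = (1+u_n)^p\, \nabla G_k(u_n)$) cancels the factor $(1+u_n)^{-p}$ in the operator and leaves $\alpha \int_\Omega |\nabla G_k(u_n)|^2$ on the left. On $A_n(k)=\{u_n>k\}$, the lower bound $u_n \ge k$ at once tames the singular source, since $(u_n+1/n)^{-\gamma} \le k^{-\gamma}$; using also $\phi_k(u_n) \le C(1+u_n)^{p+1}$ and $u_n = k + G_k(u_n)$, the right-hand side is controlled by a combination of $C k^{p+1-\gamma}\int_{A_n(k)} f$ and $C k^{-\gamma}\int_{A_n(k)} f\, G_k(u_n)^{p+1}$.

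From here the remaining work is a classical Stampacchia iteration: Sobolev's inequality applied to $G_k(u_n)$ together with H\"older's inequality on $f\in L^m$ (and, where needed, Young's inequality to absorb powers of $\|G_k(u_n)\|_{2^*}$) converts the previous estimate into a recursive decay $|A_n(h)| \le C(h-k)^{-\mu}|A_n(k)|^\theta$ for $h>k\ge k_0$, the hypothesis $m > N/2$ being exactly what produces the exponent $\theta > 1$ required to close the iteration. Stampacchia's lemma then supplies a level $K$, independent of $n$, with $|A_n(K)| = 0$; the uniform bound passes to $u$.

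The main obstacle is the simultaneous presence of the two singularities: the test function must cancel the degenerate factor $(1+u_n)^p$ while remaining in $H^1_0(\Omega)$, and the iteration must absorb the auxiliary term $C k^{p+1-\gamma}\int f$ (which only vanishes with $k$ when $\gamma \ge p+1$) through the decay of $|A_n(k)|$. Checking that the resulting exponents match across the three regimes of $\gamma - p$ addressed in parts (1)--(3) is the delicate accounting to be carried out, but $m > N/2$ suffices in each case, just as in the classical Stampacchia boundedness theorem for coercive equations.
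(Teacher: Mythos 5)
There is a genuine gap: your proposal only engages with part (4) of the theorem. Parts (1)--(3) assert the \emph{existence} of a solution in the sense of (\ref{defndistributional}), and the a priori estimates of Section \ref{section_stime} do not by themselves deliver this: one must pass to the limit in the weak formulation of (\ref{pbapprossimanti}). That step is not routine here, because both sides of the equation are nonlinear in $u_n$. For the operator term one needs $\frac{a(x)\nabla u_n}{(1+T_n(u_n))^p}\cdot\nabla\varphi$ to converge, which the paper gets from the weak convergence of $\nabla u_n$ combined with the strong $L^r$ convergence of $(1+T_n(u_n))^{-p}$ to $(1+u)^{-p}$; for the right-hand side one needs to dominate $\frac{T_n(f)\varphi}{(u_n+1/n)^\gamma}$, and this is exactly where the uniform local lower bound $u_n\geq c_\omega$ on $\omega=\{\varphi\neq 0\}$ from Proposition \ref{propsuccessionecrescente} is indispensable. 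Your phrase ``the a.e.\ convergence already established'' presupposes a limit object that solves the equation, but establishing that is the core of the proof of parts (1)--(3) and is absent from the proposal.

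On part (4) itself your strategy is reasonable but diverges from the paper's and leaves the hardest case unverified. You choose the primitive of $(1+t)^p\chi_{\{t>k\}}$ so as to cancel the degeneracy of the operator, which forces you to carry the term $Ck^{p+1-\gamma}\int_{A_n(k)}f$ on the right; when $\gamma<p+1$ this factor \emph{grows} with $k$, so the classical Stampacchia lemma does not apply as stated and you would need a generalized version with level-dependent constants --- precisely the ``delicate accounting'' you defer. The paper instead tests with $[(u_n+1)^{\gamma+1}-(k+1)^{\gamma+1}]_+$, whose exponent is tuned to the singularity of the source rather than to the operator: the right-hand side then collapses to $c(\gamma)\int_{A_k}|f|(u_n-k)$ as in (\ref{limitatezza}), at the price of a possibly degenerate left-hand side $\int_{A_k}|\nabla u_n|^2(1+u_n)^{\gamma-p}$, which is handled by quoting Stampacchia when $p\leq\gamma$ and Lemma 2.2 of the degenerate-coercivity paper when $0<p-\gamma\leq 1$ (the hypothesis $\gamma\geq p-1$ guarantees one of these two regimes). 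If you keep your test function you must actually close the modified iteration for $\gamma<p+1$; as written, that case is not proved.
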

Let us point out the regularizing effects of the right hand side. It is useful to recall
the results obtained in \cite{bdo} for  problem (\ref{pbwithout}). Let  $p<1$ and
$
\displaystyle {q}=\frac{Nm(1-p)}{N-m(1+p)}\,.
$
\begin{enumerate}
\item[a)]
If $\displaystyle 1<m \leq \frac{2N}{N+2-p(N-2)}$,
then there exists $u \in W^{1,q}_0(\Omega)$ or $|\nabla u|^s \in L^1(\Omega)$, $\forall\,s<q$.
\item[b)]
If $\displaystyle \frac{2N}{N+2-p(N-2)}\leq m <\frac{N}{2}$, then
 there exists
$u \in H^1_0(\Omega)\cap L^{m^{**}(1-p)}(\Omega)$.
\item[c)]
If $\displaystyle m>\frac{N}{2}$,
then  there exists $u \in H^1_0(\Omega)\cap L^{\infty}(\Omega)\,.$
\end{enumerate}
We now compare the summabilities obtained in Theorem \ref{main_result} to the previous ones.
First of all, we have a solution for every $p>0$, if $\gamma\geq p-1$. This is not the case for problem (\ref{pbwithout}), as proved in \cite{alvino}.
Under the same conditions on $f$, the summability of the solutions to (\ref{pb}) is better than or equal to that of the solutions to (\ref{pbwithout}), since $\sigma>q$ and $m^{**}(\gamma+1-p)>m^{**}(1-p)$.
Moreover, we get $H^1_0(\Omega)$ solutions for less regular sources than in  \cite{bdo}. Indeed, if $p-1\leq \gamma<p+1$,
one has
$\displaystyle \frac{2^*}{2^*-p-1+\gamma}<\frac{2N}{N(1 -p)+2(p+1)}$; if $\gamma=p+1$ we get a finite energy solution for every $L^1(\Omega)$
source.


\section{Approximating problems}\label{sectionapproxproblems}
As explained in the Introduction, we will work on the following approximating problems:
 \be\label{pbapprossimanti}
\begin{cases}
\disp
-\div\left(\frac{a(x)\D u_n}{(1+|T_n(u_n)|)^p}\right)  = \frac{T_n(f)}{\left(|u_n|+\frac 1n\right)^\gamma} & \mbox{in $\Omega$,} \\
\hfill u_n = 0 \hfill & \mbox{on $\partial\Omega$,}
\end{cases}
\ee
where $n \in \N$ and
\be\label{troncatura}
T_n(s)=
\begin{cases}
-n, & s\leq -n
\\
s, & -n\leq s\leq n
\\
n, & s\geq n\,.
\end{cases}
\ee
Observe that we have``trun\-ca\-ted" the degenerate coercivity of the operator term and the singularity of the right hand side.

\begin{prop}
Problems (\ref{pbapprossimanti}) are well posed, that is, there exists a non-negative solution $u_n \in H^1_0(\Omega)\cap L^{\infty}(\Omega)$ for every fixed $n \in \N$.
\end{prop}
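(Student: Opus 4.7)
The plan is to invoke Schauder's fixed point theorem on a map obtained by freezing the nonlinearities of~(\ref{pbapprossimanti}) in the unknown. Given $v \in L^2(\Omega)$, let $w = S(v)$ be the unique $H^1_0(\Omega)$ solution to the linear problem
\begin{equation*}
-\div\left(\frac{a(x)\D w}{(1+|T_n(v)|)^p}\right)=\frac{T_n(f)}{\left(|v|+\frac{1}{n}\right)^\gamma}\,.
\end{equation*}
Because $0\le|T_n(v)|\le n$, the coefficient $a(x)/(1+|T_n(v)|)^p$ is bounded between $\alpha/(1+n)^p$ and $\beta$, so the associated bilinear form is coercive on $H^1_0(\Omega)$; the right-hand side is dominated pointwise by $n^{\gamma+1}$. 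Lax--Milgram therefore provides a unique $w\in H^1_0(\Omega)$, and using $w^-$ as a test function, together with $T_n(f)\ge 0$, forces $w\ge 0$.

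Next I would build an invariant set for $S$. Testing with $w$ and applying Poincar\'e yields $\|\D w\|_{L^2}\le R_n$ with $R_n$ depending only on $n$, $\alpha$, $\Omega$. The set $K=\{v\in L^2(\Omega):v\ge 0,\ \|\D v\|_{L^2}\le R_n\}$ is convex and compact in $L^2(\Omega)$ by Rellich's theorem, and $S$ maps $K$ into itself. For continuity, if $v_k\to v$ in $L^2(\Omega)$, extract an a.e.\ convergent subsequence; the frozen coefficients and the right-hand side are bounded uniformly in $k$ by constants depending only on $n$ (here the shift by $1/n$ is essential, as it controls the singular denominator by $n^\gamma$), so dominated convergence permits passing to the limit in the weak formulation of $w_k=S(v_k)$, and uniqueness for the limit problem identifies the limit as $S(v)$; the standard subsequence principle delivers continuity of the full sequence. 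Schauder's theorem then produces a fixed point $u_n=S(u_n)\in H^1_0(\Omega)$ with $u_n\ge 0$, which is a weak solution of~(\ref{pbapprossimanti}).

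The $L^\infty$ bound follows from Stampacchia's truncation method: testing~(\ref{pbapprossimanti}) against $G_k(u_n)=(u_n-k)^+$ and using the ellipticity constant $\alpha/(1+n)^p$ together with the $L^\infty$ bound $n^{\gamma+1}$ on the right-hand side yields $|\{u_n>k\}|=0$ for $k$ sufficiently large. The main technical point in the whole argument is the continuity of $S$; everything else is routine for linear coercive problems with bounded data.
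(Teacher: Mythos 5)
Your argument is correct and follows the same basic strategy as the paper: freeze the unknown in the nonlinearities, solve the resulting auxiliary problem, establish an invariant ball via the test function $w$ together with the crude bounds $T_n(f)\le n$ and $(|v|+\tfrac1n)^{-\gamma}\le n^\gamma$, and conclude with Schauder. The one genuine difference is in how much you freeze: the paper freezes only the right-hand side, so its auxiliary problem is still quasilinear in $w_n$ and it invokes Leray--Lions for solvability and Stampacchia for boundedness of that auxiliary solution; you freeze the coefficient $(1+|T_n(v)|)^{-p}$ as well, which reduces the auxiliary problem to a linear uniformly elliptic one handled by Lax--Milgram. Both versions produce the same fixed-point equation, hence the same solution of (\ref{pbapprossimanti}). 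Your linearization is more elementary, at the modest price of having to track the $L^2$-to-a.e.\ convergence of the frozen coefficient when proving continuity of $S$ (which you do correctly, since the coefficient is bounded between $\alpha(1+n)^{-p}$ and $\beta$ and converges a.e.\ along subsequences, so it converges strongly in every $L^r$, $r<\infty$, against the weakly convergent gradients). Your nonnegativity argument via testing with $w^-$ and your $L^\infty$ bound via Stampacchia truncation at the fixed point are equivalent to the paper's appeal to the maximum principle and to \cite{stampacchia}; only make sure you state that the elements of your invariant set $K$ lie in $H^1_0(\Omega)$ so that the gradient constraint makes sense and $K$ is closed in $L^2(\Omega)$ by weak lower semicontinuity.
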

\begin{proof}
In this proof we will use the same technique as in \cite{bo}.
Let $S: L^2(\Omega)\to L^2(\Omega)$ be the map which associates to  every $v \in L^2(\Omega)$ the solution $w_n \in H^1_0(\Omega)$ to
 \be
 \begin{cases}
\disp
-\div\left(\frac{a(x)\D w_n}{(1+|T_n(w_n)|)^p}\right)  = \frac{T_n(f)}{\left(|v|+\frac 1n\right)^\gamma} & \mbox{in $\Omega$,} \\
\hfill w_n = 0 \hfill & \mbox{on $\partial\Omega$.}
\end{cases}
\ee
Observe that $S$ is well-defined by the results of \cite{Leray-Lions} and $w_n$ is bounded by the results of \cite{stampacchia}.
Let us choose  $w_n$ as a test function. Then,
$$
\alpha \io \frac{|\nabla w_n|^2}{(1+n)^p}\leq \io \frac{a(x)|\nabla w_n|^2}{(1+T_n(w_n))^p}= \io \frac{T_n(f) w_n}{(|v|+\frac 1n)^\gamma}\leq n^{\gamma +1}\io |w_n|
\leq |\Omega|^{1/2} n^{\gamma +1}\norma{w_n}{L^2(\Omega)}
$$
by the hypotheses on $a$ and H\"older's inequality on the right hand side.
Poincar\'e's inequality on the left hand side implies
$$
\alpha\mathcal{P}\norma{w_n}{L^2(\Omega)}^2
\leq |\Omega|^{1/2}(1+n)^p n^{\gamma +1}\norma{w_n}{L^2(\Omega)}\,.
$$
Thus there exists an invariant ball for $S$. Moreover it is easily seen  that $S$ is continuous and compact by the
$H^1_0(\Omega)\hookrightarrow L^2(\Omega)$ embedding.
By Schauder's theorem, $S$ has a fixed point. Therefore there exists a solution $u_n \in H^1_0(\Omega)$ to problems (\ref{pbapprossimanti}). Observe that
$u_n$ is bounded;
by the maximum principle,  $u_n$ is non-negative since $f$ is non-negative.
\end{proof}
\begin{rem}
We remark that the existence of solutions to (\ref{pbapprossimanti}) could not be inferred by the results established in \cite{bdo}.
\end{rem}

\begin{prop}\label{propsuccessionecrescente}
Let $u_n$ be the solution to problem (\ref{pbapprossimanti}). Then $u_n\leq u_{n+1}$ a.e. in $\Omega$.
Moreover for every  $\omega\subset\subset \Omega$ there exists $c_{\omega}>0$ such that
$u_n\geq c_{\omega}$ a.e. in $\omega$ for every $n\in \N$.
\end{prop}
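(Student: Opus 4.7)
The plan is to prove both parts via the comparison technique of \cite{bo}.

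\emph{Step 1: Monotonicity $u_n\leq u_{n+1}$.} I would test both approximating equations with $\varphi=(u_n-u_{n+1})^+\in H^1_0(\Omega)$ and subtract, obtaining
$$
\int_\Omega a(x)\!\left[\frac{\nabla u_n}{(1+T_n(u_n))^p}-\frac{\nabla u_{n+1}}{(1+T_{n+1}(u_{n+1}))^p}\right]\!\cdot\nabla\varphi=\int_\Omega\!\left[\frac{T_n(f)}{(u_n+\frac{1}{n})^\gamma}-\frac{T_{n+1}(f)}{(u_{n+1}+\frac{1}{n+1})^\gamma}\right]\!\varphi.
$$
On the set $\{u_n>u_{n+1}\}=\{\varphi>0\}$ one has $T_n(f)\leq T_{n+1}(f)$ and $u_n+\frac{1}{n}>u_{n+1}+\frac{1}{n+1}$ (since $u_n>u_{n+1}$ and $\frac{1}{n}>\frac{1}{n+1}$), making each summand of the right-hand bracket pointwise non-positive; hence the right-hand side is $\leq 0$. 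Showing that the left-hand side dominates a positive multiple of $\int_\Omega|\nabla\varphi|^2$ then forces $\nabla\varphi=0$ a.e., and the Dirichlet condition gives $\varphi\equiv 0$.

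\emph{Step 2: Uniform local positivity.} Step 1 gives $u_n\geq u_1$ for every $n$, so it suffices to bound $u_1$ from below on compact sets. The function $u_1\in H^1_0(\Omega)\cap L^\infty(\Omega)$ solves a linear uniformly elliptic equation $-\div(\tilde a\,\nabla u_1)=\tilde g$ with $\tilde a(x)=a(x)/(1+T_1(u_1))^p\in[\alpha/2^p,\beta]$ (using $T_1(u_1)\in[0,1]$) and non-negative bounded right-hand side $\tilde g=T_1(f)/(u_1+1)^\gamma$, which is non-trivial as soon as $f\not\equiv 0$ (the only interesting case). By De Giorgi--Nash continuity, $u_1$ is continuous in $\Omega$; by the strong maximum principle, $u_1>0$ in $\Omega$. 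For each $\omega\subset\subset\Omega$ we set $c_\omega:=\min_{\overline{\omega}}u_1>0$ and conclude $u_n\geq u_1\geq c_\omega$ on $\omega$.

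\emph{Main obstacle.} The delicate point is the left-hand side estimate in Step 1: because the diffusion coefficients $(1+T_n(u_n))^{-p}$ and $(1+T_{n+1}(u_{n+1}))^{-p}$ are different functions of $x$, one cannot reduce it to the manifestly non-negative quantity $\int a|\nabla\varphi|^2$ that appears in \cite{bo} for the Laplacian. My plan is to introduce the Kirchhoff-type primitives $\Psi_n(s)=\int_0^s(1+T_n(t))^{-p}\,dt$, so that each equation rewrites as $-\div(a\nabla\Psi_n(u_n))=\mathrm{RHS}_n$, and to exploit the facts that $\Psi_n\equiv\Psi_{n+1}$ on $[0,n]$ and $\Psi_n\geq\Psi_{n+1}$ on $[0,+\infty)$ in order to absorb the cross terms produced by expanding the integrand on $\{u_n>u_{n+1}\}$.
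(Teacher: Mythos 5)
Your Step~2 is sound and, modulo presentation, matches the paper: the paper passes through the change of unknown $h(u_1)=\int_0^{u_1}(1+T_1(t))^{-p}\,dt$ and compares $h(u_1)$ with the solution $z$ of $-\mathrm{div}(a\nabla z)=T_1(f)/(c+1)^\gamma$, while you apply De Giorgi--Nash and the strong maximum principle directly to $u_1$ viewed as a solution of a linear equation with frozen bounded measurable coefficients; both routes are legitimate (and both tacitly assume $f\not\equiv 0$, as you note). The genuine gap is in Step~1, and it is precisely the point you defer to the ``main obstacle'': the coercivity of the left-hand side is never established, and the plan you sketch does not close. If you set $w_j=\Psi_j(u_j)$, the equations do become $-\mathrm{div}(a\nabla w_j)=\mathrm{RHS}_j$ and testing the difference with $(w_n-w_{n+1})^+$ makes the left-hand side a perfect square; but the sign information on the right-hand side is expressed in terms of $u_n-u_{n+1}$, not of $w_n-w_{n+1}$. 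Since $\Psi_n\geq\Psi_{n+1}$ with strict inequality on $(n,+\infty)$ --- and $u_n$ may well exceed $n$ there, its $L^\infty$ bound being of order $n^{\gamma+1}(1+n)^p$, not $n$ --- the set $\{w_n>w_{n+1}\}$ is \emph{not} contained in $\{u_n\geq u_{n+1}\}$, so you cannot conclude that the right-hand side is nonpositive where the test function is positive. Conversely, if you test with a function of $(u_n-u_{n+1})^+$ (or with $(\Psi_{n+1}(u_n)-\Psi_{n+1}(u_{n+1}))^+$) so as to keep the correct sign on the right, the left-hand side is no longer a square, exactly because the coefficient in the equation for $u_n$ involves $T_n$ and not $T_{n+1}$: the operator $v\mapsto-\mathrm{div}\bigl(a\nabla v/(1+T_n(v))^p\bigr)$ is not monotone, and $\int_\Omega a\bigl[\frac{\nabla u_n}{(1+T_n(u_n))^p}-\frac{\nabla u_{n+1}}{(1+T_{n+1}(u_{n+1}))^p}\bigr]\cdot\nabla(u_n-u_{n+1})^+$ does not dominate a multiple of $\int_\Omega|\nabla(u_n-u_{n+1})^+|^2$.

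The paper resolves this with Porretta's device, which is the missing ingredient: first replace the right-hand side of the $u_n$-equation by the larger quantity $T_{n+1}(f)/(u_n+\tfrac{1}{n+1})^\gamma$, then test the difference of the equations with $T_k((u_n-u_{n+1})^+)$ for \emph{small} $k>0$. On $\{0\leq u_n-u_{n+1}\leq k\}$ the two weights $(1+T_n(u_n))^{-p}$ and $(1+T_{n+1}(u_{n+1}))^{-p}$ are comparable within a factor $2^p$ once $k$ is small, so Cauchy--Schwarz absorbs the cross term and yields
\begin{equation*}
\int_\Omega\frac{|\nabla T_k((u_n-u_{n+1})^+)|^2}{(1+T_n(u_n))^p}\;\leq\;C\,k^2\!\!\int\limits_{\{0\leq u_n-u_{n+1}\leq k\}}\frac{|\nabla u_{n+1}|^2}{(1+T_{n+1}(u_{n+1}))^{p}}\,,
\end{equation*}
whose right-hand side, after division by $k^2$, tends to $0$ as $k\to 0$; Poincar\'e's inequality then forces $|\{u_n-u_{n+1}\geq k\}|\to 0$, i.e.\ $u_n\leq u_{n+1}$ a.e. Without this (or an equivalent) quantitative step exploiting the smallness of the truncation level, your Step~1 remains a statement of intent rather than a proof.
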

\begin{proof}
We will use the same technique as in  \cite{po} to prove uniqueness of the solutions to problem (\ref{pbwithout}).
In the proof $C$ will denote a positive constant independent of $n$ (depending on $\alpha, \beta, p$ and the constant $\mathcal{P}$ of  Poincar\'e's inequality).
The solution $u_n$ to problem (\ref{pbapprossimanti}) satisfies
$$
-\div\left(\frac{a(x)\D u_n}{(1+T_n(u_n))^p}\right)=\frac{T_n(f)}{\left(u_n+\frac 1n\right)^\gamma}\leq
\frac{T_{n+1}(f)}{\left(u_n+\frac{1}{n+1}\right)^\gamma}\,.
$$
Therefore
$$
-\div\left(\!\frac{a(x)\D u_n}{(1+T_n(u_n))^p}-\frac{a(x)\D u_{n+1}}{(1+T_{n+1}(u_{n+1}))^p}\!\right)\!=\!{T_{n+1}(f)}\left[\frac{1}{\left(u_n+\frac{1}{n+1}\right)^\gamma}-\frac{1}{\left(u_{n+1}+\frac{1}{n+1}\right)^\gamma}\right]\leq 0\,.
$$
By choosing $T_k((u_n-u_{n+1})^+)$ as a test function we get
$$
\alpha\io \frac{|\D T_k((u_n-u_{n+1})^+)|^2}{(1+T_n(u_n))^p}
$$
$$
\leq\beta
\io \nabla u_{n+1}\cdot \nabla T_k((u_n-u_{n+1})^+)\left[\frac{1}{(1+T_{n+1}(u_{n+1}))^p}-\frac{1}{(1+T_n(u_n))^p}\right]
$$
by the hypotheses on $a$.
In $\{0\leq u_n-u_{n+1}\leq k\}$ one has
$$
\left|\frac{1}{(1+T_{n+1}(u_{n+1}))^p}-\frac{1}{(1+T_n(u_n))^p}\right|\leq k \left[\frac{1}{(1+T_{n+1}(u_{n+1}))^p}+\frac{1}{(1+T_n(u_n))^p}\right]\,.
$$
Therefore
$$
\io \frac{|\D T_k((u_n-u_{n+1})^+)|^2}{(1+T_n(u_n))^p}\leq
$$
$$
Ck\int\limits_{\{0\leq u_n-u_{n+1}\leq k\}} |\nabla u_{n+1}||\nabla T_k((u_n-u_{n+1})^+)|\left|\frac{1}{(1+T_{n+1}(u_{n+1}))^p}+\frac{1}{(1+T_n(u_n))^p}\right|\,.
$$
For sufficiently small $k$, one has in $\{0\leq u_n-u_{n+1}\leq k\}$
$$
\frac{1}{2^p}\frac{1}{(1+T_n(u_n))^p}\leq \frac{1}{(1+T_{n+1}(u_{n+1}))^p}\leq
\frac{2^p}{(1+T_n(u_n))^p}\,.
$$
This implies that
$$
\io \frac{|\D T_k((u_n-u_{n+1})^+)|^2}{(1+T_n(u_n))^p}\leq
C k\int\limits_{\{0\leq u_n-u_{n+1}\leq k\}} \frac{|\nabla u_{n+1}|}{(1+T_{n+1}(u_{n+1}))^{p/2}}\frac{|\nabla T_k((u_n-u_{n+1})^+)|}{(1+T_n(u_n))^{p/2}}\,.
$$
H\"older's inequality on the right hand side gives
$$
\io \frac{|\D T_k((u_n-u_{n+1})^+)|^2}{(1+T_n(u_n))^p}
$$
$$
\leq
C k
\left[\int\limits_{\{0\leq u_n-u_{n+1}\leq k\}}\!\!\! \frac{|\nabla u_{n+1}|^2}{(1+T_{n+1}(u_{n+1}))^{p}}\right]^{\frac12}
\left[\int\limits_{\{0\leq u_n-u_{n+1}\leq k\}}\!\!\! \frac{|\nabla T_k((u_n-u_{n+1})^+)|^2}{(1+T_n(u_n))^{p}}\right]^{\frac12}\,,
$$
and then
\begin{equation}\label{stima_alessio}
\io \frac{|\D T_k((u_n-u_{n+1})^+)|^2}{(1+T_n(u_n))^p}\leq
C k^2
\int\limits_{\{0\leq u_n-u_{n+1}\leq k\}}\!\!\! \frac{|\nabla u_{n+1}|^2}{(1+T_{n+1}(u_{n+1}))^{p}}\,.
\end{equation}
On the other hand, by Poincar\'e's inequality and (\ref{stima_alessio})
$$
k^2|\{0\leq u_n-u_{n+1}\leq k\}|\leq
\io \frac{|\D T_k((u_n-u_{n+1})^+)|^2}{(1+T_n(u_n))^p}(1+T_n(u_n))^p
$$
$$
\leq
C k^2
\int\limits_{\{0\leq u_n-u_{n+1}\leq k\}}\!\!\! \frac{|\nabla u_{n+1}|^2 (1+n)^p}{(1+T_{n+1}(u_{n+1}))^{p}}\,,
$$
that is,
$$
|\{0\leq u_n-u_{n+1}\leq k\}|\leq
C
\int\limits_{\{0\leq u_n-u_{n+1}\leq k\}}\!\!\! \frac{|\nabla u_{n+1}|^2 (1+n)^p}{(1+T_{n+1}(u_{n+1}))^{p}}\,.
$$
The right hand side of the above inequality  tends to 0, as $k\to 0$. Therefore
$|\{0\leq u_n-u_{n+1}\leq k\}|\to 0$ as $k\to 0$. This implies that $u_n\leq u_{n+1}$ a.e. in $\Omega$.

We remark that $u_1$ is bounded, that is,
$|u_1|\leq c$, for some positive constant $c$. Setting $\displaystyle h(s)=\int_0^s\frac{dt}{(1+T_1(t))^p}$, we have
$$
-\div\left(a(x)\D (h(u_1))\right)=-\div\left(a(x)\frac{\D u_1}{(1+T_1(u_1))^p}\right)\geq \frac{T_1(f)}{(c+1)^\gamma}\,.
$$
Let $z$ be the $H^1_0(\Omega)$ solution to $\displaystyle -\div\left(a(x)\D z\right)=\frac{T_1(f)}{(c+1)^\gamma}$.
By the strong maximum principle, for every  $\omega\subset\subset \Omega$ there exists a positive constant $c_{\omega}$ such that
$z\geq c_{\omega}$ a.e. in $\omega$.
By the comparison principle, we have $h(u_1)\geq z$ a.e. in $\Omega$.
The strict monotonicity of $h$  implies the existence of a constant $c_{\omega}>0$, for every  $\omega\subset\subset \Omega$,  such that
$u_1\geq c_{\omega}$ a.e. in $\omega$.
Since  $u_n$ is an increasing sequence, as proved above,
$u_n\geq c_{\omega}$ a.e. in $\omega$ for every $n \in \N$.
\end{proof}


\section{Existence results}\label{section_stime}
We are going to prove in the following three lemmata some {\it a priori} estimates on the solutions $u_n$ to problems (\ref{pbapprossimanti}). They will allow us  to show Theorem \ref{main_result}. In the proofs $C$ will denote a positive constant independent of $n$.
\begin{lemma}\label{stimaH^1_0}
Assume that
$p-1\leq \gamma< p+1$.
\begin{enumerate}
\item
Let $f \in L^m(\Omega)$, with
$\displaystyle m\geq  \frac{2^*}{2^*-p-1+\gamma}$.
Then the solutions $u_n$ to (\ref{pbapprossimanti}) are uniformly bounded in $H^1_0(\Omega)$.
If $\displaystyle \frac{2^*}{2^*-p-1+\gamma}\leq m<\frac N2$ then the solutions $u_n$  are uniformly bounded in $L^{m^{**}(\gamma+1-p)}(\Omega)$.
\item
Let
$f \in L^m(\Omega)$, with
$\displaystyle \max\left\{1,\frac{1^*}{2\cdot 1^*-p-1+\gamma}\right\}< m< \frac{2^*}{2^*-p-1+\gamma}\,.$
Then the solutions $u_n$ to (\ref{pbapprossimanti}) are uniformly bounded in $W^{1,\sigma}_0(\Omega)$, $\displaystyle \sigma=\frac{Nm(\gamma+1-p)}{N+\gamma m-m(p+1)}$.
\end{enumerate}
\end{lemma}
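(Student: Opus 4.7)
The plan is to test the approximating equation \rife{pbapprossimanti} with the test function
$$\phi_n=(u_n+1/n)^{2\eta-1}-(1/n)^{2\eta-1},$$
where $\eta\ge(\gamma+1)/2$ is chosen according to $m,p,\gamma$. This $\phi_n$ lies in $H^1_0(\Omega)\cap L^\infty(\Omega)$ (since $u_n$ does) and vanishes on $\partial\Omega$; crucially, the factor $(u_n+1/n)^{2\eta-1}$ is designed to cancel the singularity $(u_n+1/n)^{-\gamma}$ on the right-hand side, the requirement $2\eta-1-\gamma\geq 0$ preventing any $n$-dependent blow-up.

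Using $a(x)\geq\alpha$ and $(1+T_n(u_n))^p\leq(1+u_n)^p$, the substitution produces
\begin{equation*}
(2\eta-1)\alpha \io \frac{(u_n+1/n)^{2\eta-2}}{(1+u_n)^p}|\D u_n|^2
\;\le\; \io f\,(u_n+1/n)^{2\eta-1-\gamma}
\;\le\; \io f\,(1+u_n)^{2\eta-1-\gamma}.
\end{equation*}
Setting $\nu=\eta-p/2$, the left-hand side integrand should be compared with $(1+u_n)^{2\nu-2}|\D u_n|^2 = \nu^{-2}|\D(1+u_n)^\nu|^2$, so that Sobolev's inequality yields control of $\|(1+u_n)^\nu\|_{L^{2^*}}$. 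On the right-hand side, Hölder gives $\|f\|_{L^m}\|(1+u_n)\|_{L^{(2\eta-1-\gamma)m'}}^{2\eta-1-\gamma}$.

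The choice of $\eta$ is dictated by making the two Lebesgue exponents equal: $(2\eta-1-\gamma)m'=2^*\nu$. Solving gives $\nu_0=m'(\gamma+1-p)/(2m'-2^*)$, and a direct computation shows $2^*\nu_0=m^{**}(\gamma+1-p)$. Since $(2\nu_0)-(2\eta_0-1-\gamma)=\gamma+1-p>0$ by hypothesis, Young's inequality closes the estimate and produces the uniform bound
$$\|u_n\|_{L^{m^{**}(\gamma+1-p)}(\Omega)}\le C.$$
A short calculation shows $\nu_0\geq 1\Leftrightarrow m\ge 2^*/(2^*-p-1+\gamma)$; in that case $(1+u_n)^{2\nu_0-2}\geq 1$ pointwise, so $\int|\D u_n|^2\le\nu_0^{-2}\int|\D(1+u_n)^{\nu_0}|^2\le C$, giving the $H^1_0$ bound of (1). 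For (2), with $\nu_0<1$, I would apply Hölder with exponents $2/\sigma$ and $2/(2-\sigma)$ to
$$|\D u_n|^\sigma = \left[(1+u_n)^{2\nu_0-2}|\D u_n|^2\right]^{\sigma/2}(1+u_n)^{(1-\nu_0)\sigma};$$
requiring $2(1-\nu_0)\sigma/(2-\sigma)=2^*\nu_0$ reproduces exactly $\sigma=Nm(\gamma+1-p)/(N+\gamma m-m(p+1))$, and the lower bound $m>1^*/(2\cdot 1^*-p-1+\gamma)$ guarantees $\sigma>1$.

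The main technical obstacle is the pointwise comparison of $(u_n+1/n)^{2\eta_0-2}(1+u_n)^{-p}$ with $(1+u_n)^{2\nu_0-2}$ needed to pass from the energy inequality to $\int|\D(1+u_n)^{\nu_0}|^2\le\text{RHS}$. When $2\eta_0-2\le 0$ the inequality $u_n+1/n\le 1+u_n$ directly delivers the estimate on all of $\Omega$; when $2\eta_0-2>0$ (covering in particular the $H^1_0$ regime of (1)) the comparison is uniform on $\{u_n\ge 1\}$, while on $\{u_n<1\}$ the factor $(1+u_n)^{2\eta_0-2-p}$ is bounded and I would use Proposition \ref{propsuccessionecrescente}, together with the boundary condition $u_n|_{\partial\Omega}=0$, to control the residual contribution by the same Sobolev-Hölder chain.
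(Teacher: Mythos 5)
Your exponent bookkeeping coincides exactly with the paper's: your $2\eta_0-1$ is the paper's exponent $\delta$ (resp.\ $\theta$), the identity $2^*\nu_0=m^{**}(\gamma+1-p)$, the threshold $\nu_0\ge 1\Leftrightarrow m\ge \frac{2^*}{2^*-p-1+\gamma}$, and the interpolation giving $\sigma$ are all the same computations. The difference is the test function, and that is where the argument breaks. The paper tests with $(1+u_n)^{\delta}-1$, whose gradient carries the weight $(1+u_n)^{\delta-1}\ge 1$, so the left-hand side controls $\io (1+u_n)^{\delta-1-p}|\D u_n|^2$ on \emph{all} of $\Omega$, including where $u_n$ is small; the singular factor is then handled on the right-hand side. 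You instead test with $(u_n+\frac1n)^{2\eta-1}-(\frac1n)^{2\eta-1}$, which cleanly kills the singularity on the right but leaves the weight $(u_n+\frac1n)^{2\eta_0-2}(1+u_n)^{-p}$ on the left. Since $2\eta_0-2=2\nu_0+p-2>0$ in the whole of case (1) (and in part of case (2), e.g.\ always when $\gamma\ge1$), this weight can be as small as $(\frac1n)^{2\eta_0-2}$ wherever $u_n$ is small, and your energy inequality then gives no control at all of $\io|\D T_1(u_n)|^2$.

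Your proposed repair — Proposition \ref{propsuccessionecrescente} together with $u_n=0$ on $\partial\Omega$ — cannot close this gap: the lower bounds $c_\omega$ hold only for $\omega\subset\subset\Omega$ and degenerate as $\omega$ exhausts $\Omega$, while the set $\{u_n<1\}$ always contains a full neighbourhood of $\partial\Omega$, where $u_n$ may be of order $\frac1n$; the boundary condition works against you here, it does not help. No Sobolev--H\"older manipulation of the right-hand side can supply the missing quantity, because what is missing is precisely the gradient integral near the boundary, which your inequality never sees. Consequently the $H^1_0$ bound of (1), and the $W^{1,\sigma}_0$ bound of (2) whenever $2\nu_0+p>2$, do not follow from your scheme; only the summability bound in $L^{m^{**}(\gamma+1-p)}(\Omega)$ is salvageable, since it needs gradient information only where $u_n$ is large (replace $(1+u_n)^{\nu_0}$ by $(1+(u_n-1)^+)^{\nu_0}$, on whose support your comparison is uniform). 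To recover the gradient estimates you would have to do what the paper does: put the non-degenerate power of $1+u_n$ into the test function and absorb the resulting term $\frac{(1+u_n)^{\delta}-1}{(u_n+\frac1n)^{\gamma}}\le C\,u_n^{1-\gamma}(1+u_n)^{\delta-1}$ on the right-hand side, rather than trying to make the right-hand side identically regular at the cost of a degenerate left-hand weight.
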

\begin{proof}
In case (1)
let us choose $(1+u_n)^{p+1}-1$ as a test function; the hypotheses on $a$ imply that
$$
\alpha(p+1)\io \frac{|\D u_n|^2}{(1+T_n(u_n))^p}(1+u_n)^{p}\leq C \io |f|u_n^{p+1-\gamma}\,.
$$
By  Sobolev's inequality on the left hand side and  H\"older's inequality with exponent $\displaystyle \overline{m}=\frac{2^*}{2^*-p-1+\gamma}=\frac{2N}{N(\gamma +1 -p)+2(p+1-\gamma)}(>1)$
on the right one,  we have
$$
\mathcal{S}\alpha(p+1)\norma{u_n}{L^{2^*}(\Omega)}^2\leq \alpha(p+1)\norma{\D u_n}{L^2(\Omega)}^2\leq \norma{f}{L^{{\overline{m}}}(\Omega)}\left[\io |u_n|^{{\overline{m}}'(p+1-\gamma)}\right]^{\frac{1}{{\overline{m}}'}}\,.
$$
We remark that
$2^*={\overline{m}}'(p+1-\gamma)$. Moreover
$\displaystyle \frac{2}{2^*}\geq \frac{1}{{\overline{m}}'}$ as
$\gamma\leq p+1$.
Then the above estimate implies that the sequence $u_n$ is bounded in $L^{2^*}(\Omega)$
and in $H^1_0(\Omega)$.

We are now going to prove that $u_n$ is bounded in $L^{m^{**}(\gamma+1-p)}(\Omega)$, if $\displaystyle m<\frac N2$.
Let us choose $(1+u_n)^{\delta}-1$ as a test function: by the hypotheses on $a$, one has
$$
\frac{4\alpha \delta}{(-p+\delta+1)^2}\io {|\D [(1+u_n)^{\frac{-p+\delta+1}{2}}-1]|^2}=
\alpha \delta\io \frac{|\D u_n|^2}{(1+u_n))^{p-\delta+1}}
$$
$$
\leq
\alpha \delta\io \frac{|\D u_n|^2}{(1+T_n(u_n))^p}(1+u_n)^{\delta -1}\leq \io \frac{T_n(f)}{(u_n + \frac 1n)^{\gamma}}[(u_n+1)^{\delta}-1]\leq
C+ C\io \frac{|f|}{(u_n + 1)^{-\delta+\gamma}}\,.
$$
By Sobolev's inequality on the left hand side and H\"older's inequality on the right one
we have
$$
\left[\io {[(1+u_n)^{\frac{-p+\delta+1}{2}}-1]|^{2^*}}\right]^{\frac{2}{2^*}}
\leq C
\norma{f}{L^{m}(\Omega)}\left[\io |u_n+1|^{m'(\delta-\gamma)}\right]^{\frac{1}{m'}}\,.
$$
Let $\delta$ be such that $\displaystyle \frac{(1+\delta-p)N}{N-2}=\frac{(\delta-\gamma)m}{m-1}$
and $\displaystyle \frac{2}{2^*}\geq \frac{1}{m'}$, that is,
$\displaystyle \delta=\frac{(1-p)N(m-1)+\gamma m(N-2)}{N-2m}$ and $\displaystyle m\leq \frac N2$.
We observe that $\displaystyle (-p+\delta+1)\frac{2^*}{2}=m^{**}(\gamma+1-p)> 1$.
This implies that
$u_n$ is bounded in $L^{m^{**}(\gamma+1-p)}(\Omega)$.

In  case (2), let us choose $(1+u_n)^{\theta}-1$, $\displaystyle \theta=\frac{(p-1)N(m-1)-\gamma m(N-2)}{2m-N}$, as a test function.
With the same arguments as before,  we have
$$
\left[\io {[(1+u_n)^{\frac{-p+\theta+1}{2}}-1]|^{2^*}}\right]^{\frac{2}{2^*}}
\leq C\io \frac{|\D u_n|^2}{(1+u_n))^{p-\theta+1}}
\leq C
\norma{f}{L^{m}(\Omega)}\left[\io |u_n+1|^{m'(\theta-\gamma)}\right]^{\frac{1}{m'}}\,.
$$
As above, we infer that $u_n$ is bounded in $L^{\frac{N(1+\theta-p)}{N-2}}(\Omega)$.
We observe that $p-\theta + 1>0$ and $\displaystyle 1<\sigma=\frac{Nm(\gamma+1-p)}{N-m(p+1-\gamma)}<2$, by the assumptions on $m$.
Writing
$$
\io |\nabla u_n|^{\sigma}=\io \frac{|\nabla u_n|^{\sigma}}{(1+u_n)^{\sigma \frac{p-\theta +1}{2}}}(1+u_n)^{\sigma \frac{p-\theta +1}{2}}
$$
and using H\"older's inequality with exponent $\displaystyle \frac{2}{\sigma}$, we obtain
$$
\io |\nabla u_n|^{\sigma}\leq \left[\io \frac{|\nabla u_n|^{2}}{(1+u_n)^{p-\theta +1}}\right]^{\frac{\sigma}{2}}
\left[\io (1+u_n)^{\sigma \frac{p-\theta +1}{2-\sigma}}\right]^{\frac{2-\sigma}{2}}\,.
$$
The above estimates imply that the sequence $u_n$  is  bounded in $W^{1,\sigma}_0(\Omega)$
if $\displaystyle {\sigma \frac{p-\theta +1}{2-\sigma}}=\frac{N(1+\theta-p)}{N-2}$, that is, $\displaystyle \sigma=\frac{Nm(\gamma +1-p)}{N-m(p+1-\gamma)}$.
\end{proof}

\begin{lemma}\label{secondo lemma}
Assume that
$\gamma= p+1$
and
$f \in L^1(\Omega)$.
Then the solutions $u_n$ to (\ref{pbapprossimanti}) are uniformly bounded in $H^1_0(\Omega)$.
\end{lemma}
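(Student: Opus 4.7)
\medskip

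\noindent\textbf{Proof plan.}
The plan is to test the approximating equation (\ref{pbapprossimanti}) with
\[
\varphi_n = (1+u_n)^{p+1}-1,
\]
which is an admissible element of $H^1_0(\Omega)\cap L^\infty(\Omega)$ by the regularity of $u_n$ established in Section \ref{sectionapproxproblems}. This is exactly the test function used in case (1) of Lemma \ref{stimaH^1_0}, and it is natural because its derivative $(p+1)(1+u_n)^p$ is designed to compensate the factor $(1+T_n(u_n))^{-p}$ coming from the degeneracy of the operator.

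On the left-hand side, the ellipticity $a(x)\geq \alpha$ together with the pointwise inequality $(1+u_n)^p \geq (1+T_n(u_n))^p$ (valid since $u_n\geq T_n(u_n)\geq 0$) gives
\[
\alpha(p+1) \int_\Omega |\nabla u_n|^2 \, dx \;\leq\; \int_\Omega \frac{T_n(f)\,[(1+u_n)^{p+1}-1]}{(u_n + \tfrac1n)^{p+1}} \, dx.
\]
The whole matter is then to prove that the right-hand side is bounded by $C\|f\|_{L^1(\Omega)}$, with $C$ independent of $n$. This is precisely the analogue, at the critical exponent $\gamma=p+1$, of the estimate $C\int_\Omega |f|\,u_n^{p+1-\gamma}$ that appears in case (1) of Lemma \ref{stimaH^1_0}: the power $u_n^{p+1-\gamma}$ collapses to $1$, and one expects the right-hand side to be controlled simply by the $L^1$-norm of $f$.

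To make this bound quantitative I would use the elementary inequality
\[
(1+s)^{p+1}-1 \;\leq\; (p+1)\,s\,(1+s)^p, \qquad s\geq 0,
\]
and split the domain of integration according to whether $u_n\geq 1$ or $u_n<1$. On $\{u_n\geq 1\}$ one has $(1+u_n)\leq 2 u_n \leq 2(u_n+\tfrac1n)$, so the quotient $[(1+u_n)^{p+1}-1]/(u_n+\tfrac1n)^{p+1}$ is bounded by a dimensional constant and the corresponding part of the integral is at most $C\|f\|_{L^1(\Omega)}$. Once the $L^1$-bound on the right-hand side is in place, the displayed coercivity inequality yields directly the uniform estimate of $\{u_n\}$ in $H^1_0(\Omega)$.

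The main obstacle is the contribution of the boundary layer $\{u_n<1\}$ (more specifically, $\{u_n\sim 1/n\}$): there the pointwise quotient is not uniformly controlled, and in fact its supremum grows like $n^p$. To absorb this term I would exploit the monotonicity and strict positivity provided by Proposition \ref{propsuccessionecrescente}: since $u_n\geq u_1$ and $u_1\geq c_\omega>0$ on every $\omega\subset\subset\Omega$, the region where $u_n<1/n$ is confined to an arbitrarily thin neighbourhood of $\partial\Omega$, and the singular contribution is compensated by the vanishing of $(1+u_n)^{p+1}-1\sim(p+1)u_n$ there. Quantifying this cancellation is the delicate point of the proof.
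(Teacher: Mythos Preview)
Your test function $(1+u_n)^{p+1}-1$ and overall strategy coincide exactly with the paper's proof. The paper's argument is two lines: it writes
\[
\alpha(p+1)\int_\Omega \frac{|\nabla u_n|^2}{(1+T_n(u_n))^p}(1+u_n)^p \;\leq\; C\int_\Omega |f|
\]
and concludes immediately that $u_n$ is bounded in $H^1_0(\Omega)$. It does not split the domain into $\{u_n\geq1\}$ and $\{u_n<1\}$, and it does not invoke Proposition~\ref{propsuccessionecrescente} at all.

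The difficulty you raise on the set $\{u_n<1\}$ is genuine: the pointwise quotient $[(1+s)^{p+1}-1]/(s+\tfrac1n)^{p+1}$ is \emph{not} uniformly bounded in $n$ (its maximum over $s\geq0$ is of order $n^p$, attained near $s\sim 1/n$), so the one-line bound the paper writes cannot be justified by a pointwise inequality. In this sense you are being more careful than the paper. However, your proposed remedy via the local lower bound $u_n\geq c_\omega$ does not close the argument either: the integral runs over all of $\Omega$, the constant $c_\omega^{-(p+1)}$ blows up as $\omega\uparrow\Omega$, and there is no a priori relation between this blow-up and the smallness of $\int_{\Omega\setminus\omega}|f|$ that would let you balance the two contributions. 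So the ``delicate point'' you flag remains open in your outline as well.

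In short: same approach as the paper, and you correctly identify a subtlety that the paper passes over in silence; but the resolution you sketch is not the right one, and you would need a different device (for instance, replacing the test function by $(u_n+\tfrac1n)^{p+1}-(\tfrac1n)^{p+1}$, which makes the right-hand side trivially $\leq\|f\|_{L^1}$, and then recovering the full gradient via a second estimate) to make the argument rigorous.
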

\begin{proof}
Let us choose $(1+u_n)^{p+1}-1$ as a test function. Using that $a(x)\geq \alpha$ a.e. in $\Omega$, we have
$$
\alpha(p+1)\io \frac{|\D u_n|^2}{(1+T_n(u_n))^p}(1+u_n)^{p}\leq C\io |f|\,.
$$
The previous estimate implies that the sequence $u_n$ is bounded  in $H^1_0(\Omega)$.
\end{proof}

\begin{lemma}\label{terzo lemma}
Assume that
$\gamma> p+1$
and
$f \in L^1(\Omega)$.
Then the solutions $u_n$ to (\ref{pbapprossimanti}) are such that
$u_n^{\frac{\gamma+1-p}2}$ is uniformly bounded in $H^1_0(\Omega)$, $u_n$ is uniformly bounded in $L^{\frac{\gamma+1-p}2{2^*}}(\Omega)$ and in $H^1_{loc}(\Omega)$.
\end{lemma}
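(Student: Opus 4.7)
The plan is to follow the template of Lemma \ref{secondo lemma} and test \eqref{pbapprossimanti} against a power of $u_n$. Since $\gamma-p>1$, the function $u_n^{\gamma-p}$ lies in $H^1_0(\Omega)\cap L^\infty(\Omega)$ and is an admissible test function. Substituting it into \eqref{pbapprossimanti} produces
\begin{equation*}
(\gamma-p)\io \frac{a(x)\, u_n^{\gamma-p-1}\,|\D u_n|^2}{(1+T_n(u_n))^p}
=\io \frac{T_n(f)\, u_n^{\gamma-p}}{(u_n+1/n)^{\gamma}}.
\end{equation*}
On the left-hand side, using $a\geq \alpha$, the inequality $(1+T_n(u_n))^p\leq (1+u_n)^p$, and the chain-rule identity $u_n^{\gamma-p-1}|\D u_n|^2 = \frac{4}{(\gamma+1-p)^2}|\D u_n^{(\gamma+1-p)/2}|^2$, I produce a quantity proportional to $\io |\D u_n^{(\gamma+1-p)/2}|^2/(1+u_n)^p$. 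On the right-hand side, the bound $u_n^{\gamma-p}\leq (u_n+1/n)^{\gamma-p}$ reduces the integrand to $T_n(f)/(u_n+1/n)^p$, which I control by a multiple of $\|f\|_{L^1(\Omega)}$. Combining these, the aim is
\begin{equation*}
\io |\D u_n^{(\gamma+1-p)/2}|^2 \leq C,
\end{equation*}
uniformly in $n$.

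Once this $H^1$-bound on $u_n^{(\gamma+1-p)/2}$ is established, the remaining claims follow directly. Since $u_n=0$ on $\partial\Omega$ and $(\gamma+1-p)/2>1$, the function $u_n^{(\gamma+1-p)/2}$ has vanishing trace, so it is uniformly bounded in $H^1_0(\Omega)$; Sobolev's embedding $H^1_0(\Omega)\hookrightarrow L^{2^*}(\Omega)$ then gives $\|u_n^{(\gamma+1-p)/2}\|_{L^{2^*}(\Omega)}\leq C$, which is the claimed uniform bound of $u_n$ in $L^{(\gamma+1-p)\cdot 2^*/2}(\Omega)$.

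For the local $H^1$-bound, Proposition \ref{propsuccessionecrescente} provides, for each $\omega\subset\subset\Omega$, a constant $c_\omega>0$ with $u_n\geq c_\omega$ on $\omega$, uniformly in $n$. Since $\gamma-1-p>0$, the factor $u_n^{-(\gamma-1-p)}$ is uniformly bounded by $c_\omega^{-(\gamma-1-p)}$ on $\omega$, and therefore
\begin{equation*}
\int_\omega |\D u_n|^2 = \frac{4}{(\gamma+1-p)^2}\int_\omega u_n^{-(\gamma-1-p)}|\D u_n^{(\gamma+1-p)/2}|^2 \leq C(\omega),
\end{equation*}
giving the desired $H^1_{loc}$-bound.

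The main obstacle is the intertwining of the two singular terms on the right-hand side of the test-function identity. The integrand $T_n(f)\, u_n^{\gamma-p}/(u_n+1/n)^{\gamma}$ is not pointwise uniformly controlled: on the set $\{u_n\leq 1/n\}$ it can be as large as $n^p T_n(f)$. A rigorous uniform bound must exploit the integral cancellation between the two singular powers together with the interior positivity $u_n\geq u_1$ from Proposition \ref{propsuccessionecrescente}, which keeps the degenerate set $\{u_n\leq 1/n\}$ from carrying significant mass of $f$; a similar care is needed to pass from the weighted estimate $\io |\D u_n^{(\gamma+1-p)/2}|^2/(1+u_n)^p\leq C$ to the unweighted one.
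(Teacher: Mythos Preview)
The obstacle you flag in your last paragraph is real and is fatal for your choice of test function. After the estimate $u_n^{\gamma-p}\leq (u_n+1/n)^{\gamma-p}$, the right-hand side becomes $\io T_n(f)/(u_n+1/n)^{p}$, and this quantity is \emph{not} uniformly bounded: the interior lower bound $u_n\geq c_\omega$ from Proposition~\ref{propsuccessionecrescente} degenerates as $\omega$ exhausts $\Omega$, and near $\partial\Omega$ (where $u_n$ has zero trace) the integrand is genuinely of order $n^{p}f$. Monotonicity gives only $u_n\geq u_1$, and there is no reason for $\io f/u_1^{p}$ to be finite, so the ``integral cancellation'' you allude to is not available once you have already passed to $(u_n+1/n)^{-p}$. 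The same difficulty blocks the passage from the weighted estimate $\io |\D u_n^{(\gamma+1-p)/2}|^2/(1+u_n)^{p}\leq C$ to the unweighted one.

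The paper's remedy is simply to test with $u_n^{\gamma}$ instead of $u_n^{\gamma-p}$. The exponent $\gamma$ is chosen precisely so that the right-hand side becomes
\[
\io \frac{T_n(f)\,u_n^{\gamma}}{(u_n+1/n)^{\gamma}}\;\leq\;\io |f|
\]
immediately, since $u_n\leq u_n+1/n$; the singularity cancels exactly and no positivity argument is needed. On the left-hand side the extra factor $u_n^{p}$ (compared with your choice) balances the denominator $(1+T_n(u_n))^{p}$, and one lands directly on $\alpha\gamma\io u_n^{\gamma-1-p}|\D u_n|^2=\frac{4\alpha\gamma}{(\gamma+1-p)^2}\io |\D(u_n^{(\gamma+1-p)/2})|^2$, so both of your unresolved issues disappear at once. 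Your route to the $H^1_{loc}$ bound---writing $|\D u_n|^2=\frac{4}{(\gamma+1-p)^2}u_n^{-(\gamma-1-p)}|\D u_n^{(\gamma+1-p)/2}|^2$ and using $u_n\geq c_\omega$ on $\omega$---is correct and in fact more direct than the paper's, which tests \eqref{pbapprossimanti} against $[(u_n+1)^{p+1}-1]\varphi^2$ for a cutoff $\varphi\in C^1_0(\Omega)$ and absorbs the cross term via Young's inequality.
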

\begin{proof}
If we  choose $u_n^{\gamma}$ as a test function and use the hypotheses on $a$ we get
$$
\frac{4\alpha \gamma}{(\gamma+1-p)^2}\io \left|\D (u_n^{\frac{\gamma+1-p}2})\right|^2=\alpha\gamma\io {|\D u_n|^2}u_n^{\gamma-1-p}\leq \io |f|\,.
$$
This proves that the sequence
$u_n^{\frac{\gamma+1-p}2}$ is  bounded in $H^1_{0}(\Omega)$.
Sobolev's inequality on the left hand side applied to  $u_n^{\frac{\gamma+1-p}2}$ gives
\begin{equation}\label{stima_u_n_dopo_Sobolev}
\io u_n^{\frac{\gamma+1-p}2{2^*}}\leq C\,.
\end{equation}
Let us prove that $u_n$ is  bounded in $H^1_{loc}(\Omega)$.
For $\varphi \in C^1_0(\Omega)$ we choose $[(u_n+1)^{p+1}-1] \varphi^2$ as a test function
in (\ref{pbapprossimanti}). Then, if $\omega=\{\varphi\neq 0\}$, one has by the hypotheses on $a$ and Lemma \ref{propsuccessionecrescente}
$$
\alpha(p+1)\io |\nabla u_n|^2 \varphi^2 + 2\alpha\io {\nabla u_n \cdot \nabla \varphi\, \varphi}\,u_n
\leq
\io \frac{|f|}{(u_n + \frac 1n)^{\gamma}}[(u_n+1)^{p+1}-1] \varphi^2
$$
$$
\leq \io C_{\omega}{|f|} \varphi^2
\leq C_{\omega}\norma{\varphi}{L^{\infty}(\Omega)}^2\io {|f|}\,,
$$
where $C_{\omega}$ is a positive constant depending only on $\omega\subset \subset \Omega$ and $p$.
Then
\begin{equation}\label{ultimaultimocaso}
(p+1)\io |\nabla u_n|^2 \varphi^2\leq - 2\io {\nabla u_n \cdot \nabla \varphi \,\varphi}\,u_n
+  \frac{C_{\omega}}{\alpha}\norma{\varphi}{L^{\infty}(\Omega)}^2\io{|f|}\,.
\end{equation}
Young's inequality implies that
$$
2\left|\io {\nabla u_n \cdot \nabla \varphi\, \varphi\,u_n}\right|
\leq
 \io |\nabla u_n|^2 \varphi^2
+
\io |\nabla \varphi|^2 u_n^2\,.
$$
From (\ref{ultimaultimocaso}) we infer that
$$
p\io |\nabla u_n|^2 \varphi^2
\leq
\io |\nabla \varphi|^2 u_n^2
+ \frac{C_{\omega}}{\alpha}\norma{\varphi}{L^{\infty}(\Omega)}^2\io {|f|}\,.
$$
Since $u_n$ is uniformly bounded in $L^2(\Omega)$ by (\ref{stima_u_n_dopo_Sobolev}), this proves our result.
\end{proof}

We are now able to prove Theorem \ref{main_result}.
\begin{proof}
We will prove point (1); the second and the third point can be proved in  a similar way.
Lemma \ref{stimaH^1_0} gives the existence of a function $u \in H^1_{0}(\Omega)$
such that $u_n\to u$ weakly in $H^1_{0}(\Omega)$ and a.e. in $\Omega$, up to a subsequence.
We will prove that $u$ is a solution to (\ref{pb}) passing to the limit in (\ref{pbapprossimanti}).
For every $\varphi \in C^1_0(\Omega)$,
$$
\io \frac{a(x)\D u_n \cdot \nabla \varphi}{(1+T_n(u_n))^p}\to \io \frac{a(x)\D u \cdot \nabla \varphi}{(1+u)^p}\,,
$$
since $\displaystyle \frac{1}{(1+T_n(u_n))^p}\to \frac{1}{(1+u)^p}$ in $L^r(\Omega)$, for every $r\geq 1$.
For the limit of the right hand side of (\ref{pbapprossimanti}), let $\omega=\{\varphi\neq 0\}$. One can use Lebesgue's theorem, since
$$
\left|\frac{T_n(f)\varphi}{\left(u_n+\frac 1n\right)^{\gamma}}\right|\leq
\frac{|\varphi||f|}{c_{\omega}^\gamma}\,,
$$
where $c_{\omega}$ is the constant given by Lemma \ref{propsuccessionecrescente}.
In the case where $\displaystyle \frac{2^*}{2^*-p-1+\gamma}\leq m<\frac N2$, since $m^{**}(\gamma+1-p)>1$, Lemma \ref{stimaH^1_0} implies that  $u_n$ converges weakly in $L^{m^{**}(\gamma+1-p)}(\Omega)$
to some function,  which is $u$ by identification.

To prove that $u$ is bounded for $\gamma\geq p-1$, let us choose $[(u_n+1)^{\gamma+1}-(k+1)^{\gamma+1}]_+$ as a test function in  (\ref{pbapprossimanti}):
\begin{equation}\label{limitatezza}
\alpha(\gamma+1)\int_{A_k} \frac{|\nabla u_n|^2}{(1+u_n)^{p-\gamma}}\leq \int_{A_k} |f|\frac{(u_n+1)^{\gamma+1}-(k+1)^{\gamma+1}}{(u_n+\frac 1n)^{\gamma}}
\leq c(\gamma)\int_{A_k} |f|(u_n-k)\,,
\end{equation}
where $A_k=\{u_n\geq k\}$ and $c(\gamma)$ denotes a positive constant depending only on $\gamma$.

For $p-\gamma\leq 0$, (\ref{limitatezza}) is the starting point of of the proof of Theorem 4.1 in \cite{stampacchia}.
For $0<p-\gamma\leq 1$,
(\ref{limitatezza}) is the starting point of  Lemma 2.2 in \cite{bdo}. In both cases $u_n$ is uniformly bounded in $L^{\infty}(\Omega)$ and therefore (at the a.e. limit) the solutions $u$ found in the previous results are bounded.
\end{proof}
\begin{rem}
We observe that we have the boundedness of the solution to problem (\ref{pb}) for any value of $\gamma\geq p-1$.
\end{rem}

\section*{Acknowledgments}
Part of this work was done during a visit  to La Sapienza Universit\`a di Roma  whose hospitality is gratefully
acknowledged.

 \end{document}